\def\@tocline#1#2#3#4#5#6#7{\relax
  \ifnum #1>\c@tocdepth 
  \else
    \par \addpenalty\@secpenalty\addvspace{#2}%
    \begingroup \hyphenpenalty\@M
    \@ifempty{#4}{%
      \@tempdima\csname r@tocindent\number#1\endcsname\relax
    }{%
      \@tempdima#4\relax
    }%
    \parindent\z@ \leftskip#3\relax \advance\leftskip\@tempdima\relax
    \rightskip\@pnumwidth plus4em \parfillskip-\@pnumwidth
    #5\leavevmode\hskip-\@tempdima
      \ifcase #1
      \or\or \hskip 2em \or \hskip 2homologyem \else \hskip 3em \fi%
      #6\nobreak\relax
    \dotfill\hbox to\@pnumwidth{\@tocpagenum{#7}}\par
    \nobreak
    \endgroup
  \fi}
\theoremstyle{plain}
\newtheorem{theorem}{Theorem}[section]
\newtheorem{question}[theorem]{Question}
\newtheorem{lemma}[theorem]{Lemma}
\newtheorem{corollary}[theorem]{Corollary}
\newtheorem{proposition}[theorem]{Proposition}
\theoremstyle{definition}
\newtheorem{remark}[theorem]{Remark}
\newtheorem{definition}[theorem]{Definition}
\newtheorem{example}[theorem]{Example}
\numberwithin{equation}{section}
\newcommand{\inj}{\hookrightarrow}
\newcommand{\tensor}{\otimes}
\newcommand{\CH}{{\rm CH}}
\newcommand{\Proj}{{\bf{Proj}}}
\newcommand{\Pic}{{\rm Pic}}
\newcommand{\Spec}{{\rm Spec \,}}
\newcommand{\Sym}{{\rm Sym}}
\newcommand{\Z}{{\mathbb Z}}
\newcommand{\A}{{\mathbb A}}
\renewcommand{\P}{{\mathbb P}}
\def\<{\langle}
\def\>{\rangle} 
\def\-{\overline} 
\def\~{\widetilde}
\def\^{\widehat}
\def\@{\mathcal}
\def\!{\mathscr}
\def\#{\mathbb}
\def\_{\underline}
\begin{document}
	\subjclass[2010]{14D20, 14D23, 14F42}
	\keywords{$\A^1$-connectedness, Moduli of vector bundles, $\A^1$-concordance}
\title{$\A^1$-connectedness of moduli of vector bundles on a curve}

\author{Amit Hogadi}
\address{Department of Mathematical Sciences, Indian Institute of Science Education and Research Pune, Dr. Homi Bhabha Road, Pashan, Pune 411008, India.}
\email{amit@iiserpune.ac.in}

\author{Suraj Yadav}
\address{Universit\"{a}t Regensburg, Universit\"{a}tsstr. 31, 93040, Regensburg, Germany}
\email{suraj.yadav@ur.de}


\begin{abstract} 
In this note we prove that the moduli stack of vector bundles on a curve with a fixed determinant is $\A^1$-connected. We obtain this result by classifying vector bundles on a curve up to $\A^1$-concordance.Consequently we classify $\P^n$-bundles on a curve up to $\A^1$-weak equivalence, extending a result in \cite{Asok-Morel} of Asok-Morel. We also give an explicit example of a variety which is $\A^1$-$h$-cobordant to a projective bundle over $\P^2$ but does not have the structure of a projective bundle over $\P^2$, thus answering a question of Asok-Kebekus-Wendt \cite{AKW}. 
\end{abstract}

\maketitle

\setlength{\parskip}{2pt plus1pt minus1pt}

\section{Introduction}
Let $C$ be a smooth projective curve of genus $g $ over a field $k$. Fix a line bundle $\@{L} \in \mathrm{Pic}(C)$. Consider the following moduli stack $Bun_{n,\@L}$,
\begin{center}
	$ Bun_{n,\@L} (Y)$ = \{ category of rank $n $ vector bundles on $C \times Y$, such that for any object $\@V$, $\mathrm{det}\@V \cong  p^\ast(\@L),$ where $p: C \times Y \rightarrow C \} $
\end{center}
This is a smooth algebraic stack \cite[Prop 1.3]{alp}. Any stack can be regarded as a simplicial sheaf via the nerve construction (see \cite{hollander}) and thus  it defines an object in the $\A^1$-homotopy category. Therefore it makes sense to talk about $\A^1$-connectedness of $Bun_{n,\@L}$. Following is the main theorem of this note:

\begin{theorem}\label{main}
	$Bun_{n,\@L}$ is $\A^1$-connected for any curve $C$ over an infinite field $k$ and $\@L \in \Pic(C)$.
\end{theorem}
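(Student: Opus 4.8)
The plan is to unwind the statement into a concrete question about vector bundles and answer it by exhibiting explicit families over $\A^1$. Since $Bun_{n,\mathcal{L}}$ is a smooth algebraic stack, it is $\A^1$-connected provided that for every finitely generated separable field extension $L/k$ it admits an $L$-point and any two $L$-points are joined by a chain of elementary $\A^1$-homotopies; reducing $\A^1$-connectedness of the (nerve of the) stack to this naive statement is where the hypothesis that $k$ is infinite enters, and I would invoke the standard results of Morel and of Balwe--Hogadi--Sawant on naive $\A^1$-connected components for this. An $L$-point is a rank $n$ bundle $\mathcal{V}$ on $C_L$ with $\det\mathcal{V}\cong\mathcal{L}_L$, and an elementary homotopy between $\mathcal{V}_0,\mathcal{V}_1$ is a rank $n$ bundle $\widetilde{\mathcal{V}}$ on $C_L\times\A^1$ with $\det\widetilde{\mathcal{V}}\cong p^{\ast}\mathcal{L}_L$ (here $p$ is the projection to $C_L$) restricting to $\mathcal{V}_0$ and $\mathcal{V}_1$ over $0$ and $1$; call the resulting equivalence relation $\A^1$-concordance. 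Because $\Pic(C_L\times\A^1)=\Pic(C_L)$, the determinant of any locally free $\widetilde{\mathcal{V}}$ is automatically pulled back from $C_L$, so the determinant condition costs nothing and is maintained for free below. As $\mathcal{O}_{C_L}^{\,n-1}\oplus\mathcal{L}_L$ is always an $L$-point, it remains to $\A^1$-concord an arbitrary $\mathcal{V}$ to this fixed bundle.

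Step 1 (reduction to sums of line bundles). I would first record the familiar ``scaling the extension class'' construction: given $0\to\mathcal{A}\to\mathcal{E}\to\mathcal{B}\to 0$ on $C_L$ with class $\xi\in\Ext^1(\mathcal{B},\mathcal{A})$, the extension on $C_L\times\A^1$ with class $t\cdot p^{\ast}\xi$ is an $\A^1$-concordance from $\mathcal{E}$ (at $t=1$) to $\mathcal{A}\oplus\mathcal{B}$ (at $t=0$). Since every vector bundle on the smooth curve $C_L$ contains a line subbundle (saturate a nonzero section of a sufficiently positive twist) and hence is an iterated extension of line bundles, an induction on the rank, tensoring the concordances by fixed summands, shows that $\mathcal{V}$ is $\A^1$-concordant to a direct sum $\mathcal{M}_1\oplus\cdots\oplus\mathcal{M}_n$ of line bundles with $\bigotimes_i\mathcal{M}_i\cong\mathcal{L}_L$.

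Step 2 (moving Picard classes between the summands). The heart of the argument is a ``transfer move'': for line bundles $\mathcal{M}_1,\mathcal{M}_2$ and any effective divisor $D$ on $C_L$,
\[
\mathcal{M}_1\oplus\mathcal{M}_2\ \text{is }\A^1\text{-concordant to}\ \mathcal{M}_1(D)\oplus\mathcal{M}_2(-D).
\]
Both bundles are colength-$\deg D$ subsheaves of $\mathcal{F}:=\mathcal{M}_1(D)\oplus\mathcal{M}_2$; choosing an isomorphism $\mathcal{M}_1(D)|_D\xrightarrow{\ \sim\ }\mathcal{M}_2|_D$ of $\mathcal{O}_D$-modules (possible since $\Pic(D)=0$) and letting $\alpha\colon\mathcal{M}_1(D)\to\mathcal{M}_2|_D$ and $\beta\colon\mathcal{M}_2\to\mathcal{M}_2|_D$ be the induced surjections, I would set
\[
\widetilde{\mathcal{V}}\ :=\ \Ker\Big(p^{\ast}\mathcal{F}\ \xrightarrow{\ (p^{\ast}\alpha,\ t\, p^{\ast}\beta)\ }\ p^{\ast}(\mathcal{M}_2|_D)\Big)\quad\text{on }C_L\times\A^1 .
\]
One checks (flatness over $\A^1$ plus fiberwise local freeness) that $\widetilde{\mathcal{V}}$ is locally free of rank $2$ with constant determinant $\mathcal{M}_1\otimes\mathcal{M}_2$, equals $\mathcal{M}_1\oplus\mathcal{M}_2$ at $t=0$, and at $t=1$ is an extension of $\mathcal{M}_1(D)$ by $\mathcal{M}_2(-D)$, which by Step 1 is concordant to $\mathcal{M}_1(D)\oplus\mathcal{M}_2(-D)$. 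Applying this move (in either direction) to one pair of summands at a time, with the other summands carried along unchanged, and writing an arbitrary line bundle as a difference of two effective divisors, one can transfer the Picard class of every summand into one fixed slot; as the total determinant is preserved this yields $\mathcal{O}_{C_L}^{\,n-1}\oplus\mathcal{L}_L$. Chaining the concordances of Steps 1 and 2 proves the theorem.

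The step I expect to be the genuine obstacle is the first one: not the geometry, which is elementary and works over any field, but pinning down the precise $\A^1$-homotopy-theoretic input, namely that $\A^1$-connectedness of the stack (through its nerve, after Nisnevich sheafification and $\A^1$-localization) is already detected by chains of such elementary homotopies over finitely generated separable field extensions. This is exactly where infiniteness of $k$ is used and where one must be careful about the interplay of the stacky structure with the naive connected-components functor. The remaining care in Steps 1--2 is routine: the flatness and local freeness of the displayed kernel, the computation of its two fibers, and the bookkeeping that every intermediate family has determinant $p^{\ast}\mathcal{L}_L$.
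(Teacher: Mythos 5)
Your proposal is correct, and its overall architecture matches the paper's: the reduction of $\A^1$-connectedness of the nerve of $Bun_{n,\mathcal{L}}$ to showing that any two bundles over a finitely generated field extension are naively $\A^1$-concordant is exactly the paper's Lemma \ref{weak} (via Morel--Voevodsky and Morel's connectivity results), so the step you flag as the ``genuine obstacle'' is available off the shelf and is not an obstacle --- though you should verify it for \emph{all} finitely generated extensions, not only separable ones, since that is what the lemma requires over a possibly imperfect infinite field (your Steps 1--2 work verbatim in that generality, so nothing is lost). Your Step 1 is essentially the paper's Proposition \ref{bridge}: where you scale the extension class by $t$ directly (using $\Ext^1_{C\times\A^1}(p^{\ast}\mathcal{B},p^{\ast}\mathcal{A})\cong\Ext^1_C(\mathcal{B},\mathcal{A})\otimes k[t]$), the paper invokes Lange's representability of the $\Ext$-functor and restricts the universal family to a line in $\A^n$; the two are interchangeable. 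Where you genuinely diverge is the normalization step: the paper never fully splits $\mathcal{E}$ into line bundles, but instead peels off one sub-line-bundle $\mathcal{O}_C(-m)$ at a time (for $m\gg 0$ so that the relevant twists are globally generated) and compares $\mathcal{O}_C(-m)\oplus\mathcal{L}(m)$ with $\mathcal{O}_C\oplus\mathcal{L}$ through the single exact sequence $0\to\mathcal{O}_C(-m)\to\mathcal{O}_C\oplus\mathcal{L}\to\mathcal{L}(m)\to 0$, so all concordances come from Proposition \ref{bridge} alone; you instead split into a sum of line bundles and introduce a ``transfer move'' realized by an explicit flat family of elementary modifications $\Ker\bigl(p^{\ast}(\mathcal{M}_1(D)\oplus\mathcal{M}_2)\to p^{\ast}(\mathcal{M}_2|_D)\bigr)$ over $C\times\A^1$. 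That construction checks out (the quotient is a line bundle on the Cartier divisor $D\times\A^1$, so the kernel is locally free, flat over $\A^1$, with the stated fibers and constant determinant), and it has the advantage of requiring no positivity or global-generation hypotheses on the summands, at the cost of a new family construction; the paper's route buys uniformity (one proposition drives everything) at the cost of choosing $m\gg 0$ at each inductive step.
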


The proof of the Theorem \ref{main} relies on finding an explicit $\A^1$-concordance (see \cite[Definition 5.1]{AKW} or Definition \ref{con}) between a vector bundle $\@E$ of rank $n$ and determinant $\@L$ to the vector bundle $\@O_C^{n-1} \oplus \@L$. This is achieved by induction on $n$.  In the course of this proof, we also achieve the classification of vector bundles of a given rank on the curve $C$ up to $\A^1$-concordance (Theorem \ref{conco}). Once the question of $\A^1$-connectedness of $ Bun_{n,\@L} $ is settled, it's natural to wonder the same  about its open substack $ Bun_{n,\@L} ^s$, the moduli of stable vector bundles for a curve of genus greater than 1. Assuming that $n$ and degree of $\@L$ are co-prime, $k$ algebraically closed, the coarse moduli space is known to be rational (\cite[Theorem 1.2]{rat}) and one may be tempted to conclude that $\A^1$-connectedness of an algebraic stack is dictated by that of its coarse moduli space (if it exists). However in Example \ref{stack} we show that this simply is not the case. We give an example of ``stacky" $\P^1$-an orbifold with $\P^1$ as a  coarse moduli space-which is not $\A^1$-connected. It should also be noted that the $\A^1$-connectedness is not preserved under rationality of a morphism of schemes as illustrated by $\mathbb{G}_m \hookrightarrow \A^1$. \\

Related to $\A^1$-concordance is the notion of an $\A^1$-$h$-cobordism (\cite[Definition 3.1.1]{Asok-Morel}  or Definition \ref{cob}). Furthermore projectivizations of two $\A^1$-concordant vector bundles are $\A^1$-$h$-cobordant. As an application of $\A^1$-connectedness of $ Bun_{n,\@L} $, we obtain the following theorem which classifies $\P^n$-bundles over any curve of genus $g$ up to $\A^1$-weak equivalence. This extends the result on classification of $\P^n$-bundles over $\P^1$ given in \cite{Asok-Morel}.\\
\begin{theorem}\label{curve}
	Let $X=\P_{C}(\@E)$ and $Y=\P_{C}(\@F)$ be $\P^n$-bundles over $C$, where $C$ lies over an infinite field. Then the following are equivalent : 
	\begin{enumerate}
	\item $X$ and $Y$ are $\A^1$- weakly equivalent.
	\item $X$ and $Y$ are $\A^1$-h-cobordant. 
	\item ${\rm det}(\@E) \tensor {\rm det}(\@F)^{-1} = \@L^{\otimes n+1}$, for some $\@L \in \Pic (C)$.
	\end{enumerate}
\end{theorem}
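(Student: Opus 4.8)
The plan is to establish the cycle of implications $(3)\Rightarrow(2)\Rightarrow(1)\Rightarrow(3)$.

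Two of these are essentially formal. For $(3)\Rightarrow(2)$: since $\P_C(\@E)\cong\P_C(\@E\otimes\@M)$ for every $\@M\in\Pic(C)$, I may use $(3)$ to replace $\@E$ by $\@E\otimes\@L^{-1}$ and thereby assume $\det\@E\cong\det\@F$; then $\@E$ and $\@F$ have the same rank $n+1$ and the same determinant, so by Theorem~\ref{conco} each is $\A^1$-concordant to $\@O_C^{\oplus n}\oplus\det\@E$, hence to one another, and therefore $X$ and $Y$ are $\A^1$-$h$-cobordant because projectivizations of $\A^1$-concordant bundles are $\A^1$-$h$-cobordant. For $(2)\Rightarrow(1)$: if $W$ is an $\A^1$-$h$-cobordism between $X$ and $Y$, then by definition the inclusions $X\hookrightarrow W$ and $Y\hookrightarrow W$ are $\A^1$-weak equivalences, so $X\simeq W\simeq Y$ in $\mathcal H(k)$.

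The substance is $(1)\Rightarrow(3)$, and here I would argue differently according to the genus. Suppose first $g(C)\geq 1$. Then $C$ admits no nonconstant morphism from $\A^1$, so it is $\A^1$-rigid, hence $\A^1$-local, and the structure map $\pi\colon X=\P_C(\@E)\to C$ identifies $C$ with the connected-components sheaf $\pi_0^{\A^1}(X)$ (the fibres $\P^n$ being $\A^1$-connected). Consequently an $\A^1$-weak equivalence $X\simeq Y$ induces an isomorphism $\pi_0^{\A^1}(X)\cong\pi_0^{\A^1}(Y)$, i.e. an automorphism $\varphi$ of $C$ over which, after rigidifying using that $C$ is $\A^1$-local, the equivalence sits; composing with $\varphi$, I reduce to the case where $X$ and $Y$ are $\A^1$-weakly equivalent over $C$. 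It then suffices to classify $\P^n$-bundles over a fixed curve $C$ up to fibrewise $\A^1$-weak equivalence. Such bundles are the Nisnevich-locally trivial $\mathrm{PGL}_{n+1}$-torsors on $C$; since the Nisnevich cohomological dimension of $C$ is $1$, every such torsor lifts to a $\mathrm{GL}_{n+1}$-torsor, i.e. arises as $\P_C(\@E)$ for a rank-$(n+1)$ bundle $\@E$, with $\P_C(\@E)\cong\P_C(\@F)$ over $C$ exactly when $\@F\cong\@E\otimes\@M$. Passing to $\A^1$-homotopy classes and invoking Theorem~\ref{conco} — rank-$(n+1)$ bundles up to $\A^1$-concordance are classified by their determinant, and $\@E\otimes\@M$ has determinant $\det\@E\otimes\@M^{\otimes n+1}$ — one finds that the fibrewise $\A^1$-homotopy type of $\P_C(\@E)$ is exactly the class of $\det\@E$ in $\Pic(C)/(n+1)\Pic(C)$, which is precisely condition $(3)$.

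When $g(C)=0$ the base $\P^1$ is not $\A^1$-rigid and cannot be recovered from the total space, so one must instead detect the invariant directly inside an $\A^1$-invariant of $X=\P_{\P^1}(\@E)$, following \cite{Asok-Morel}: here $X$ is $\A^1$-connected, and the goal is to extract $\deg\@E\bmod(n+1)$. For $n=1$ the intersection form on $\Pic(X)=\CH^1(X)$ already does this. For larger $n$, however, the soft $\A^1$-invariants do not suffice on their own: both $\Pic$ and the Chow ring with its cup product are $\A^1$-invariant (because $\mathbb{G}_m$ and the Milnor $K$-theory sheaves are strictly $\A^1$-invariant), but an abstract ring isomorphism between the Chow rings of two $\P^n$-bundles only forces $\det\@E\equiv\pm\det\@F$ modulo $(n+1)\Pic(C)$, whereas $(3)$ is the sharper statement without the sign. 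The main obstacle is thus pinning down this sign — equivalently, ruling out the ``reflection'' automorphisms of $\CH^\ast$ as arising from an $\A^1$-weak equivalence — which requires genuinely unstable $\A^1$-homotopical input: the computation of the first nonvanishing $\A^1$-homotopy sheaf of $\P_{\P^1}(\@E)$ as in \cite{Asok-Morel} in the genus-zero case, and the base-recovery argument above when $g(C)\geq 1$.
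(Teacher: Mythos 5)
Your arguments for $(3)\Rightarrow(2)$ and $(2)\Rightarrow(1)$ are correct and coincide with the paper's: twist so that the determinants agree, use Theorem~\ref{conco} to make both bundles $\A^1$-concordant to $\mathcal{O}_C^{n}\oplus\det\mathcal{E}$, invoke the fact that projectivizations of $\A^1$-concordant bundles are $\A^1$-$h$-cobordant (\cite[Lemma 6.4]{AKW}), and use the definition of $\A^1$-$h$-cobordism for $(2)\Rightarrow(1)$.

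The genuine gap is in $(1)\Rightarrow(3)$, and you in effect concede it. In your genus $\geq 1$ branch, the intermediate steps (that $\pi_0^{\A^1}(\P_C(\mathcal{E}))\cong C$, and that the abstract weak equivalence can be rigidified to one ``over $C$'' in a form to which torsor classification applies) are asserted rather than proved, but the decisive problem is the final sentence: the claim that the fibrewise $\A^1$-homotopy type of $\P_C(\mathcal{E})$ \emph{is exactly} the class of $\det\mathcal{E}$ in $\Pic(C)/(n+1)\Pic(C)$ does not follow from Theorem~\ref{conco}. That theorem gives only one direction (equal determinant class $\Rightarrow$ concordant bundles $\Rightarrow$ equivalent projectivizations); the converse --- that a fibrewise $\A^1$-weak equivalence $\P_C(\mathcal{E})\simeq\P_C(\mathcal{F})$ forces $\det\mathcal{E}\equiv\det\mathcal{F}$ modulo $(n+1)\Pic(C)$ --- is precisely the content of $(1)\Rightarrow(3)$, and such an equivalence need not come from a concordance of vector bundles or an isomorphism after a twist, so ``passing to $\A^1$-homotopy classes'' of $\mathrm{PGL}_{n+1}$-torsors is not an available move. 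In the genus $0$, $n\geq 2$ branch you defer to ``the computation of the first nonvanishing $\A^1$-homotopy sheaf as in \cite{Asok-Morel}'' without carrying it out (this branch could at least be patched by reducing, via Theorem~\ref{conco}, to bundles of the form $\mathcal{O}^{n}\oplus\mathcal{O}(a)$ and citing \cite[Proposition 3.2.10]{Asok-Morel}), and you explicitly describe the sign determination as ``the main obstacle''; so in both branches the implication is left open. For comparison, the paper proves $(1)\Rightarrow(3)$ uniformly in the genus by $\A^1$-invariance of Chow rings plus an explicit analysis of graded ring isomorphisms $\CH^*(\P(\mathcal{O}_C^{n}\oplus\mathcal{L}_1))\to\CH^*(\P(\mathcal{O}_C^{n}\oplus\mathcal{L}_2))$, comparing coefficients of $\sigma^{n}$ in the image of the defining relation. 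Your closing observation is pertinent to that method: an isomorphism with $\phi(\zeta)=-\sigma+x$ (and identity on $\Pic(C)$) leads to $\det\mathcal{E}\otimes\det\mathcal{F}\in(n+1)\Pic(C)$ rather than to condition $(3)$, and controlling this sign, together with the induced action on $\Pic(C)$, is exactly the point the paper compresses into ``the case $a=-1$ is similar''. But identifying that subtlety is not the same as resolving it: as written, your proposal does not establish $(1)\Rightarrow(3)$.
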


In another application of our theorem we answer a question raised in \cite{AKW}: whether a variety which is $\A^1$-$h$-cobordant to a $\P^1$-bundle over $\P^2$ has a structure of $\P^1$-bundle over $\P^2$. The answer is no and we prove in the following theorem that the suggested example in op. cit. indeed works.\\

\begin{theorem} \label{count}  Let $ X:=\P_{\P^1}(\@E)$, where $ \@E := \@O \oplus \@O(-1) \oplus \@O(1)$ on $\P^1_k$ (where $k$ is an infinite field). Then $X$ is $\A^1$-$h$-cobordant to $\P^1_k \times \P^2_k$ but doesn't have the structure of a $\P^1_k$-bundle over $\P^2_k$.
\end{theorem}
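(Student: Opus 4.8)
The statement has two halves: the positive half ($X$ is $\A^1$-$h$-cobordant to $\P^1_k \times \P^2_k$) and the negative half ($X$ is not a $\P^1$-bundle over $\P^2_k$). For the positive half, I would invoke the machinery already built up in the paper. The bundle $\@E = \@O \oplus \@O(-1) \oplus \@O(1)$ on $\P^1$ has determinant $\@O$, the same as the trivial bundle $\@O^{\oplus 3}$. By Theorem \ref{conco} (the classification of vector bundles on a curve up to $\A^1$-concordance), $\@E$ and $\@O^{\oplus 3}$ are $\A^1$-concordant, since both have rank $3$ and trivial determinant on $\P^1$ (genus $0$). Since projectivizations of $\A^1$-concordant bundles are $\A^1$-$h$-cobordant — a fact asserted in the introduction just before Theorem \ref{curve} — we get that $X = \P_{\P^1}(\@E)$ is $\A^1$-$h$-cobordant to $\P_{\P^1}(\@O^{\oplus 3}) = \P^1_k \times \P^2_k$. (Equivalently, one can cite Theorem \ref{curve}: $\det(\@E)\otimes\det(\@O^{\oplus 3})^{-1} = \@O = \@L^{\otimes 3}$ with $\@L = \@O$, so the two $\P^2$-bundles over $\P^1$ are $\A^1$-$h$-cobordant.)

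The negative half is where the real work lies, and I expect it to be the main obstacle. I need to show $X$ admits no morphism to $\P^2_k$ realizing it as a Zariski-locally-trivial $\P^1$-bundle. The strategy is to derive a contradiction from the structure of the Picard group and the intersection theory of $X$. First I would compute $\Pic(X) \cong \Z^2$, generated by the pullback $h$ of $\@O_{\P^1}(1)$ and the relative $\@O(1)$, say $\xi$, for the given projective structure $X \to \P^1$; the relation is governed by the Chern classes of $\@E$, namely $\xi^3 = c_1(\@E)\xi^2 - c_2(\@E)\xi h + \dots$, which here simplifies because $c_1(\@E) = 0$. Concretely, $\xi^3 = -c_2(\@E)\,\xi h$ and $\xi^2 h = $ point, with $c_2(\@E) = \@O(-1)\cdot\@O(1) + \dots$; I would pin down these numbers precisely from $\@E = \@O\oplus\@O(-1)\oplus\@O(1)$. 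Then, if $X$ were also a $\P^1$-bundle $\pi': X \to \P^2_k$, there would be a second extremal contraction; I would analyze the cone of curves / the nef cone of $X$ and show that the class that would have to be contracted by $\pi'$ — a fiber class of the putative $\P^1$-bundle over $\P^2$ — either does not exist in $\Pic(X)$ with the right numerical properties, or that the divisor class $\pi'^*\@O_{\P^2}(1)$ cannot exist (e.g. because the required self-intersection numbers $(\pi'^*\@O(1))^3 = 0$ and $(\pi'^*\@O(1))^2 \ne 0$ are incompatible with the intersection form computed above, given that $\@E$ is not balanced and not a twist of a balanced bundle).

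An alternative, possibly cleaner, route for the negative half: a Zariski-locally-trivial $\P^1$-bundle over $\P^2$ is the projectivization $\P(\@F)$ of a rank-$2$ bundle $\@F$ on $\P^2$ (since $\Br(\P^2_k)$ contributes nothing here, or by Tsen-type arguments over the relevant function fields, at least étale-locally, and Zariski-local triviality forces $\P(\@F)$), and such an $X$ would then carry a section of $\pi'$ or at least its cohomology/Chow ring would be that of $\P(\@F)$ over $\P^2$ for some $\@F$. I would compare $\CH^*(X)$ — which I know explicitly from the $\P_{\P^1}(\@E)$ structure — with the generic form of $\CH^*(\P_{\P^2}(\@F))$ and show no choice of Chern classes $c_1(\@F), c_2(\@F)$ reproduces it; in particular the grading and the multiplicative relations (one relation in degree $2$ from $\P^2$, one in degree $2$ from the $\P^1$-bundle, versus one relation in degree $2$ from $\P^1$ and one in degree $3$ from the $\P^2$-bundle) are structurally different, so the two rings are non-isomorphic as graded rings. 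This ring-theoretic obstruction is robust and I would make it the backbone of the argument.

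The main obstacle, then, is making the non-existence argument for the $\P^1$-bundle structure genuinely airtight: one must rule out \emph{all} possible maps $X \to \P^2_k$, not just the obvious ones, and one must be careful that ``$\P^1$-bundle'' here means Zariski-locally-trivial (the paper's convention, following \cite{AKW}), so étale or Brauer-twisted forms are not a loophole one needs to separately kill — though I would remark on why they don't arise. I would organize the final proof as: (1) cite Theorem \ref{conco} / Theorem \ref{curve} for the $\A^1$-$h$-cobordism; (2) compute $\CH^*(X)$ as a graded ring; (3) show this ring is not of the form $\CH^*(\P_{\P^2}(\@F))$ for any rank-$2$ bundle $\@F$ on $\P^2_k$; (4) conclude. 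Step (3) is the crux and the place where the specific, deliberately unbalanced choice $\@O\oplus\@O(-1)\oplus\@O(1)$ does its job.
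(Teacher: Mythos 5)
Your positive half is fine and matches the paper: Theorem \ref{conco} (or Theorem \ref{curve}) gives the $\A^1$-concordance of $\@E$ with $\@O^{\oplus 3}$ and hence the $\A^1$-$h$-cobordism of $X$ with $\P^1\times\P^2$. The negative half, however, has a genuine gap, and it is precisely in the step you chose as your ``backbone.'' You propose to show that $\CH^*(X)$ is not isomorphic, as a graded ring, to $\CH^*(\P_{\P^2}(\@F))$ for any rank-$2$ bundle $\@F$ on $\P^2$. This is false: since $c_1(\@E)=0$ and $\CH^2(\P^1)=0$, the projective bundle formula gives $\CH^*(X)\cong \Z[x,y]/(x^2,y^3)$, which is exactly $\CH^*(\P_{\P^2}(\@O\oplus\@O))=\CH^*(\P^1\times\P^2)$. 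Your claimed structural difference in the relations is also off: in both presentations there is one relation in degree $2$ and one in degree $3$, and for $\@F$ with $c_1=c_2=0$ the rings coincide on the nose. More fundamentally, no $\A^1$-invariant (Chow ring or otherwise) can possibly do this job, because $X$ is $\A^1$-weakly equivalent to $\P^1\times\P^2$, which \emph{is} a $\P^1$-bundle over $\P^2$; the entire point of the Asok--Kebekus--Wendt question is that the obstruction must be non-homotopy-invariant. The paper's proof therefore uses Chow rings only to pin down Chern classes: assuming $X\cong\P_{\P^2}(\@E')$, the isomorphism of Chow rings with the trivial bundle forces $c_1(\@E')^2-4c_2(\@E')=0$, hence (after twisting) $c_1(\@E')=c_2(\@E')=0$; then a genuinely geometric argument shows the bundle map $\phi$ restricted to a fiber $F\cong\P^2$ of $\pi$ is surjective (two fibers of $\phi$ inside $F$ would be lines in $\P^2$, hence meet) and of degree $1$ (via the Chow-ring identification of $F$ with the divisor class $s$ and Grauert's theorem on intersection with $\phi$-fibers), producing a section of $\phi$, hence a splitting $0\to\@L_1\to\@E'\to\@L_2\to 0$ with trivial $\@L_i$, so $\@E'$ is trivial and $X\cong\P^1\times\P^2$ --- which is then ruled out by a direct scheme-theoretic argument (Hartshorne II, Ex.~7.9(b): no twist of $\@O\oplus\@O(-1)\oplus\@O(1)$ is trivial). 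Your proposal contains neither the section/splitting mechanism nor this scheme-level non-isomorphism, so as written it cannot be completed.

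Your first, hedged sketch (Picard rank $2$, numerics of a putative $H=\pi'^*\@O_{\P^2}(1)$) is actually the more promising alternative, but it too needs a non-numerical input: from $H^3=0$ and $H^2\neq 0$ one gets $H\equiv a\xi$ ($\xi$ the relative $\@O(1)$ of $\pi$), and numerics alone cannot exclude this --- again because $\P^1\times\P^2$ realizes it. What finishes that route is positivity: $H$ must be nef, yet $\xi\cdot C_-=-1<0$ on the section $C_-$ corresponding to the quotient $\@E\twoheadrightarrow\@O(-1)$, while $-\xi$ pairs negatively with a line in a $\pi$-fiber, so no nef class with the required intersection numbers exists. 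Either supply such a positivity/cone argument or follow the paper's section-construction; the Chow-ring comparison by itself cannot work.
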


\section{Classification of vector bundles on a curve up to $\A^1$-concordance}
In this section we classify vector bundles on a curve up to $\A^1$-concordance (Theorem \ref{conco}) and obtain the proof of Theorem \ref{main} as a consequence of that. Recall the following definition from \cite{AKW}.
\begin{definition}\cite[Definition 5.1]{AKW} \label{con}
Let $X$ be a scheme over a field $k$. Then two given vector bundles $\@E_0$ and $\@E_1$ on $X$, are said to be directly $\A^1$-concordant if there exists a vector bundle $\@E$ on $X \times \A^1$ such that $i_0^{\ast}\@E \cong \@E_0$ and $i_1^{\ast}\@E  \cong\@E_1$, where $i_k: X \times\{k\} \inj X \times \A^1$, for $k=0,1$. $\@E_0$ and $\@E_1$ on $X$ are $\A^1$-concordant if they are equivalent under the equivalence relation generated by direct $\A^1$-concordance.
\end{definition}
\begin{lemma}\label{sum}
Let $\@E_0$ and $\@E_1$ be $\A^1$-concordant vector bundles on a normal variety $X$ and $\@V$ be a vector bundle on $X\times \A^1$. Then $(i_0^{\ast}(\@V) \otimes \@L) \oplus \@E_0$ and $(i_1^{\ast}(\@V) \otimes \@L) \oplus \@E_1$  are  $\A^1$-concordant, for any $\@L \in \Pic (X)$.
\end{lemma}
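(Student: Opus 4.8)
The plan is to reduce everything to the definition of $\A^1$-concordance by working directly with a concordance between $\@E_0$ and $\@E_1$. Since $\A^1$-concordance is the equivalence relation generated by direct $\A^1$-concordance, it suffices to treat the case where $\@E_0$ and $\@E_1$ are \emph{directly} $\A^1$-concordant; the general case then follows by chaining finitely many such steps (and one should check that the operation $\@E \mapsto (j^{\ast}\@V \otimes \@L) \oplus \@E$ respects this chaining, which it does since at each intermediate bundle we pull back $\@V$ along the appropriate inclusion $X \times \{k\} \inj X \times \A^1$ and these agree on the overlaps). So assume there is a vector bundle $\@W$ on $X \times \A^1$ with $i_0^{\ast}\@W \cong \@E_0$ and $i_1^{\ast}\@W \cong \@E_1$.

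Next I would produce the candidate concordance on $X \times \A^1$ between the two bundles in the statement. Let $p \colon X \times \A^1 \to X$ be the projection, and consider the bundle
\[
\@G := \bigl( (p^{\ast}\@V)|_{\Delta} \otimes p^{\ast}\@L \bigr) \oplus \@W
\]
on $X \times \A^1$, where the first summand needs to be interpreted correctly: $\@V$ lives on $X \times \A^1$, and I want a bundle on $X \times \A^1$ whose restriction to $X \times \{0\}$ is $i_0^{\ast}\@V \otimes \@L$ and whose restriction to $X \times \{1\}$ is $i_1^{\ast}\@V \otimes \@L$. The clean way to get this is to pull $\@V$ back along the map $X \times \A^1 \to X \times \A^1$ which is the identity, i.e. simply take $\@V$ itself — but that restricts to $i_0^{\ast}\@V$ at $0$ and $i_1^{\ast}\@V$ at $1$, which is exactly what we want. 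Hence set
\[
\@G := \bigl(\@V \otimes p^{\ast}\@L\bigr) \oplus \@W
\]
on $X \times \A^1$. Then $i_0^{\ast}\@G \cong (i_0^{\ast}\@V \otimes \@L) \oplus \@E_0$ and $i_1^{\ast}\@G \cong (i_1^{\ast}\@V \otimes \@L) \oplus \@E_1$, using that $i_k^{\ast} p^{\ast}\@L \cong \@L$ and that pullback commutes with $\otimes$ and $\oplus$. This shows the two bundles are directly $\A^1$-concordant when $\@E_0, \@E_1$ are.

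For the general case, write a chain $\@E_0 = \@F^{(0)}, \@F^{(1)}, \dots, \@F^{(m)} = \@E_1$ where consecutive terms are directly $\A^1$-concordant, witnessed by bundles $\@W^{(j)}$ on $X \times \A^1$. The subtlety is that the hypothesis gives us a single $\@V$ on $X \times \A^1$, but to run the argument along the chain I need, at each stage $j$, to tensor $\@F^{(j)}$ with a line bundle and a restriction of $\@V$ in a way that is compatible at the endpoints. The honest fix is to observe that $\@V$ restricted to $X \times \{0\}$ and to $X \times \{1\}$ gives two vector bundles on $X$, call them $\@V_0$ and $\@V_1$; what Lemma \ref{sum} really asserts is that $(\@V_0 \otimes \@L) \oplus \@E_0$ and $(\@V_1 \otimes \@L) \oplus \@E_1$ are $\A^1$-concordant. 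Since $\@V_0$ and $\@V_1$ are themselves directly $\A^1$-concordant (via $\@V$), and $\A^1$-concordance of the second summands is preserved by direct sum with a fixed bundle and by tensoring with a fixed line bundle (each of these being an instance of the displayed construction above with one of the two inputs constant), one assembles the result: concatenate a concordance realizing $\@V_0 \otimes \@L \rightsquigarrow \@V_1 \otimes \@L$ stabilized by $\@E_0$, with a concordance realizing $\@E_0 \rightsquigarrow \@E_1$ stabilized by $\@V_1 \otimes \@L$. Normality of $X$ enters only to guarantee $\Pic(X \times \A^1) \cong \Pic(X)$ so that line bundles on $X \times \A^1$ behave as expected (in particular $p^{\ast}$ is essentially surjective on $\Pic$), and to keep us in the setting where these manipulations of vector bundles are well-behaved.

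The main obstacle I anticipate is purely bookkeeping: making precise the compatibility at the endpoints of the chain of direct concordances, i.e. ensuring that the "fixed $\@V$" in the statement is correctly matched with its restrictions $\@V_0, \@V_1$ at the two ends and that the stabilization-by-direct-sum construction genuinely preserves the generated equivalence relation. There is no hard geometry or cohomology here — all the needed inputs (pullback commutes with $\oplus$ and $\otimes$, $i_k^{\ast} p^{\ast} = \mathrm{id}$) are formal — so the proof is short, and the only place to be careful is the induction/chaining step just described.
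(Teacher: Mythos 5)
Your proposal is correct and is essentially the paper's own argument: the paper likewise reduces to the directly $\A^1$-concordant case and exhibits the concordance $(\@V \otimes p^{\ast}\@L) \oplus \@E$ on $X \times \A^1$, using that pullback commutes with direct sums and that $i_k^{\ast}p^{\ast}\@L \cong \@L$. Your extra bookkeeping about chaining (swapping $i_0^{\ast}\@V$ for $i_1^{\ast}\@V$ via $\@V$ itself, stabilized by a fixed summand) is a more careful spelling-out of the paper's one-line reduction, not a different route.
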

\begin{proof}
It is enough to prove the lemma in the case when $\@E_0$ and $\@E_1$ are directly $\A^1$-concordant. Let the direct $\A^1$-concordance be given by a vector bundle $\@E$ on $X \times \A^1$. Note that $p^{\ast} : \Pic (X) \rightarrow \Pic (X \times \A^1)$ (where $p: X \times \A^1 \rightarrow X$) is an isomorphism (see \cite[II, Prop. 6.6]{hart}) with the inverse given by $i_0^{\ast} = i_1^{\ast}$. Then the lemma immediately follows from the definition by considering the vector bundle $(\@V \otimes p^{\ast}\@L) \oplus \@E$ and the fact that the pullback functor commutes with the direct sums. 
\end{proof}
In light of the previous lemma, the following corollary is rather obvious but we state it nevertheless, keeping in mind its direct application in the proof of Theorem \ref{main}.
\begin{corollary}\label{obv}
Let $\@E_0$ and $\@E_1$ are $\A^1$-concordant vector bundles on a normal variety $X$ Then the following statements hold
\begin{enumerate}
	\item $\@O_X^n \oplus \@E_0$ and $\@O_X^n \oplus \@E_1$ are $\A^1$-concordant for any $n \geq 0$.
	\item $\@O_X(m) \oplus \@E_0$ and $\@O_X(m) \oplus \@E_1$ are $\A^1$-concordant for any $m$.
\end{enumerate}
\end{corollary}
\begin{proof}
	For the first statement take $\@V = \@O_{X \times \A^1}^n$, keeping the notation of the previous lemma in mind.\\
	For the second statement take $\@V = \@O_{X \times \A^1}$ and $\@L = p^{\ast} \@O_X(m)$.
\end{proof}



\subsection{$\A^1$-concordance via Ext classes}
Now we look at a way of constructing $\A^1$-concordance between vector bundles.
\begin{proposition}\label{bridge}
Let  	$0 \rightarrow \@E_0 \rightarrow \@E \rightarrow \@E_1 \rightarrow 0$ be any short exact sequence of vector bundles on a projective scheme $X$ over a field $k$. Then $\@E$ is directly $\A^1$-concordant to $\@E_0 \oplus \@E_1$.
\end{proposition}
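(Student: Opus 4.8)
The plan is to interpolate between $\@E$ and $\@E_0 \oplus \@E_1$ by ``scaling the extension class'' of the given sequence by the coordinate on $\A^1$. Write $p \colon X \times \A^1 \to X$ for the projection, $t$ for the coordinate function on $\A^1$, and let $\iota \colon \@E_0 \hookrightarrow \@E$ and $\pi \colon \@E \twoheadrightarrow \@E_1$ be the maps of the given sequence. On $X \times \A^1$ I would form the $\@O_{X \times \A^1}$-linear map
\[
\psi = (p^*\iota,\, -t) \colon p^*\@E_0 \longrightarrow p^*\@E \oplus p^*\@E_0,
\]
which is injective because its first component is, and set $\@E' := \operatorname{coker} \psi$. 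Up to reordering the summands, $\@E'$ is the pushout of $p^*\@E$ along multiplication by $t$ on the sub-bundle $p^*\@E_0$, so its extension class equals $t$ times the pullback of the class of the original sequence; one therefore expects $\@E'$ to restrict to $\@E$ at $t = 1$ and to the split extension at $t = 0$.

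The first step is to verify that $\@E'$ is a vector bundle fitting into a short exact sequence
\[
0 \longrightarrow p^*\@E_0 \longrightarrow \@E' \xrightarrow{\ p^*\pi\ } p^*\@E_1 \longrightarrow 0
\]
on $X \times \A^1$: the surjection onto $p^*\@E_1$ is induced by $(e, e_0) \mapsto p^*\pi(e)$ (which annihilates $\operatorname{im} \psi$), and a short diagram chase identifies its kernel with $p^*\@E_0$, embedded via $e_0 \mapsto (0, e_0)$. Since $p^*\@E_0$ and $p^*\@E_1$ are locally free of finite rank, so is $\@E'$, the sequence being locally split over $X \times \A^1$.

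The second step is to restrict along $i_s \colon X \times \{s\} \hookrightarrow X \times \A^1$ for $s = 0, 1$. As $i_s^*$ is right exact and $i_s^* p^* \cong \mathrm{id}$, we get $i_s^*\@E' \cong \operatorname{coker}\big(\@E_0 \xrightarrow{(\iota,\, -s)} \@E \oplus \@E_0\big)$. For $s = 0$ the map hits only the first summand, so $i_0^*\@E' \cong (\@E/\iota\@E_0) \oplus \@E_0 \cong \@E_1 \oplus \@E_0$; for $s = 1$ the assignment $(e, e_0) \mapsto e + \iota(e_0)$ exhibits $\operatorname{coker}(\iota, -1)$ as isomorphic to $\@E$. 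Hence $\@E'$ is a vector bundle on $X \times \A^1$ with $i_0^*\@E' \cong \@E_0 \oplus \@E_1$ and $i_1^*\@E' \cong \@E$, which is exactly a direct $\A^1$-concordance between $\@E$ and $\@E_0 \oplus \@E_1$ in the sense of Definition \ref{con}.

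I do not expect a genuine obstacle here. The only point needing care is that the restriction functors $i_s^*$ are merely right exact, so $\@E'$ must be presented as a \emph{cokernel} (rather than, say, as a subsheaf of $p^*\@E \oplus p^*\@E_1$) in order for the construction to commute past $i_s^*$; the cokernel presentation above is arranged precisely for this. An alternative packaging would invoke the $k[t]$-module structure on $\operatorname{Ext}^1_{X \times \A^1}(p^*\@E_1, p^*\@E_0) \cong \operatorname{Ext}^1_X(\@E_1, \@E_0) \otimes_k k[t]$ coming from flat base change along $p$, together with the naturality of the extension--class correspondence under $i_0$ and $i_1$; but the explicit cokernel is the most economical route, and it does not even use projectivity of $X$.
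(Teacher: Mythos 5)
Your construction is correct, and it takes a genuinely different route from the paper. You interpolate explicitly on $X \times \A^1$ by scaling the extension class by the coordinate $t$: presenting the interpolating sheaf as the cokernel of $(p^*\iota,-t)\colon p^*\@E_0 \to p^*\@E \oplus p^*\@E_0$ is exactly the right packaging, since the presentation commutes with the right-exact restrictions $i_s^*$, and your identification of the fibres at $t=0,1$ and of the quotient as an extension of $p^*\@E_1$ by $p^*\@E_0$ (hence locally free, the sequence being locally split) is a routine and correct diagram chase. The paper instead invokes Lange's theorem that the functor $Y \mapsto {\rm Ext}^1(p^*\@E_1,p^*\@E_0)$ is representable by $\A^n_k$, takes the universal extension on $X \times \A^n_k$, and pulls it back along a line through the $k$-points corresponding to $\@E$ and to the split extension; this uses projectivity of $X$ (and treats the case of a trivial extension class separately, since representability by $\A^n$ with $n>0$ is needed to have a nonconstant line). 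Your argument buys elementarity and greater generality -- no representability result, no projectivity hypothesis, and the split case needs no separate treatment -- while the paper's argument buys brevity by outsourcing the construction to the universal family, and fits the moduli-theoretic viewpoint running through the rest of the paper; your remark about the $k[t]$-module structure on ${\rm Ext}^1_{X\times\A^1}(p^*\@E_1,p^*\@E_0)$ is essentially the bridge between the two proofs.
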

\begin{proof}
Consider $\@E$ as an element in $\rm Ext^1(\@E_1, \@E_0)$. If $\@E$ is trivial then our claim is obvious, so assume to the contrary. Consider the moduli functor $\boldsymbol{\rm Ext^1}(\@E_1, \@E_0)$ given by $Y \mapsto {\rm Ext}^1(p^{\ast}\@E_1, p^{\ast}\@E_0)$, where $p: X \times Y \rightarrow X$.  It's well known (\cite[Proposition 3.1]{lange}) that this functor is representable by $\A^n_k$, where $n= \mathrm{dim}(\rm Ext^1(\@E_1, \@E_0))$ as a vector space over $k$ and $n>0$ by the assumption that $\@E$ is non trivial. Therefore by representability there is a universal class $\@V$ (of vector bundle) on $X \times \A^n_k$ whose pullback to $X\times{t_i} \inj X \times \A^n_k$, $i=0,1$ is $\@E$ and $\@E_0 \oplus \@E_1$ respectively for some $k$-rational points $t_i \in \A^n_k$. For any two given $k$-rational points in $\A^n_k$ (in our case $t_0$ and $t_1$) there is a closed embedding $i:\A^1_k \hookrightarrow \A^n_k$ such that the composition $\Spec k \xrightarrow{0} \A^1_k \xrightarrow{i}\A^n_k$ is $t_0$ and the composition $\Spec k \xrightarrow{1} \A^1_k\xrightarrow{i} \A^n_k$ is $t_1$. Now consider the pullback of the universal class, $(id_X \times i)^{\ast}\@V$, via the map $id_X \times i : X \times \A^1_k \to X \times \A^n_k$. By construction the vector bundle $(id_X \times i)^{\ast}\@V$ on $X \times \A^1_k$ gives a direct $\A^1$-concordance between $\@E$ and $\@E_0 \oplus \@E_1$
\end{proof}
\subsection{Classification result and proof of Theorem \ref{main}}
\begin{theorem}\label{conco}
	Let $\@E$ and $\@F$ be rank $n$ vector bundles on the curve $C$. Then the following hold 
	\begin{enumerate} 
	\item $\@E$ is $\A^1$-concordant to $\@O_C^{n-1} \oplus \det(\@E)$.
	\item $\@E$ is $\A^1$-concordant to $\@F$ iff $\det(\@E) \cong \det(\@F)$. 
	\end{enumerate}
\end{theorem}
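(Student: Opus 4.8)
The plan is to prove (1) by induction on the rank $n$, using Proposition \ref{bridge} to trade a vector bundle for a direct sum of bundles of smaller rank, and then to deduce (2) as a formal consequence of (1).

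For the base case $n=1$ there is nothing to prove, since $\@E \cong \det(\@E)$. For the inductive step, suppose $\@E$ has rank $n \geq 2$. The key geometric input is that on a curve one can always find a subline bundle $\@L_0 \hookrightarrow \@E$ with locally free quotient; for instance, twisting $\@E$ by a sufficiently positive line bundle makes it globally generated with plenty of sections, and a general section of $\@E \otimes \@M$ for $\@M$ very ample vanishes nowhere on the curve (dimension count: the sections vanishing at a given point form a subspace of codimension $n$, so the bad locus has dimension $< 1$ once $h^0$ is large), giving a subbundle $\@M^{-1} \hookrightarrow \@E$ with locally free quotient $\@Q$ of rank $n-1$. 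Here is where I would use the hypothesis that $k$ is infinite: the generic-section argument requires enough $k$-rational points, or more precisely that a nonempty open subset of the affine space $H^0(C, \@E \otimes \@M)$ has a $k$-point, which holds over any infinite field. Thus we get a short exact sequence $0 \to \@M^{-1} \to \@E \to \@Q \to 0$.

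By Proposition \ref{bridge}, $\@E$ is directly $\A^1$-concordant to $\@M^{-1} \oplus \@Q$. Now apply the inductive hypothesis to the rank-$(n-1)$ bundle $\@Q$: it is $\A^1$-concordant to $\@O_C^{n-2} \oplus \det(\@Q)$. By Corollary \ref{obv}(1), adding the fixed line bundle summand $\@M^{-1}$ — more precisely, using Lemma \ref{sum} with the concordance of $\@Q$ and the trivial bundle $\@V$ whose fibers give the constant summand $\@M^{-1}$ — we get that $\@M^{-1} \oplus \@Q$ is $\A^1$-concordant to $\@M^{-1} \oplus \@O_C^{n-2} \oplus \det(\@Q)$. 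Since $\det(\@E) \cong \@M^{-1} \otimes \det(\@Q)$, it remains to show that $\@M^{-1} \oplus \@O_C^{n-2} \oplus \det(\@Q)$ is $\A^1$-concordant to $\@O_C^{n-1} \oplus \det(\@E) = \@O_C^{n-1} \oplus \@M^{-1} \otimes \det(\@Q)$. This is a statement purely about rank-$2$ bundles (after splitting off the common $\@O_C^{n-2}$ via Corollary \ref{obv}(1)), namely that $\@M^{-1} \oplus \@N$ and $\@O_C \oplus \@M^{-1}\otimes\@N$ are $\A^1$-concordant for line bundles $\@M, \@N$; and this in turn follows from Proposition \ref{bridge} applied to any nonsplit extension $0 \to \@M^{-1} \to \@G \to \@N \to 0$ together with any nonsplit extension $0 \to \@O_C \to \@G' \to \@M^{-1}\otimes\@N \to 0$ realizing the same middle term — such a common extension $\@G$ exists because $\@M^{-1}$ and $\@O_C$ differ by the line bundle $\@M$, and one checks $\det \@G \cong \@M^{-1}\otimes\@N$ in both presentations. (On a curve the relevant $\Ext^1$ groups are nonzero as soon as the degrees are arranged by a further harmless twist, which changes nothing up to concordance by Corollary \ref{obv}(2).)

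For part (2): the "only if" direction is immediate, because $\det$ is invariant under $\A^1$-concordance — a concordance $\@E$ on $C\times\A^1$ has $\det\@E$ restricting to $\det\@E_0$ and $\det\@E_1$, and $\Pic(C)\to\Pic(C\times\A^1)$ is an isomorphism, so $\det\@E_0\cong\det\@E_1$. The "if" direction is immediate from part (1): if $\det\@E\cong\det\@F$, then both $\@E$ and $\@F$ are $\A^1$-concordant to the same bundle $\@O_C^{n-1}\oplus\det(\@E)$, hence to each other. The main obstacle I anticipate is the rank-reduction step — producing the subline bundle with locally free quotient over an arbitrary infinite field, and then carefully bookkeeping the determinant through the chain of concordances so that the line-bundle summands recombine correctly; the homotopy-theoretic content is entirely packaged in Proposition \ref{bridge} and Corollary \ref{obv}, so the real work is this elementary algebraic-geometry input and the induction's combinatorics.
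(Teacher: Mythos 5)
Your overall route is the same as the paper's: twist by a sufficiently positive line bundle to split off a line subbundle with locally free quotient, apply Proposition \ref{bridge} to replace $\@E$ by $\@M^{-1}\oplus\@Q$, induct on the rank of the quotient, add the fixed line-bundle summand via Lemma \ref{sum}/Corollary \ref{obv}, and get the ``only if'' half of (2) from homotopy invariance of $\Pic$ (the paper phrases this via $c_1$ and $\A^1$-invariance of $\CH^1$, which on a curve is the same statement). The genuine gap is in your final recombination step, where you must show that $\@M^{-1}\oplus\@N$ is $\A^1$-concordant to $\@O_C\oplus(\@M^{-1}\otimes\@N)$. You assert the existence of a rank-two bundle $\@G$ that is simultaneously an extension of $\@N$ by $\@M^{-1}$ and of $\@M^{-1}\otimes\@N$ by $\@O_C$ ``because the determinants agree''; matching determinants is necessary but far from sufficient. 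For example, if $\@N=\det(\@E)\otimes\@M$ happens to have negative degree (which can occur in your setup when $\det(\@E)$ is very negative and $\@M$ is only very ample), then every extension $\@G$ of $\@N$ by $\@M^{-1}$ satisfies $h^0(\@G)\leq h^0(\@M^{-1})+h^0(\@N)=0$, so $\@G$ contains no copy of $\@O_C$ at all and no common middle term exists. The parenthetical escape (``arrange the degrees by a further harmless twist, which changes nothing up to concordance by Corollary \ref{obv}(2)'') does not work as stated: twisting $\@N$ changes the determinant and hence the concordance class, and Corollary \ref{obv}(2) only adds a common summand to an already established concordance; it does not let you modify the bundles being compared. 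Nonsplitness of the extensions is also beside the point, since Proposition \ref{bridge} needs no such hypothesis.

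The repair is exactly what the paper does: build the extra positivity into the choice of $\@M$ at the start of the induction step, requiring not only that $\@E\otimes\@M$ be globally generated but also that $\@N=\det(\@E)\otimes\@M$ be globally generated (the paper's condition that both $\@E(m)$ and $\@L(m)$ are globally generated). Then $\@M\oplus\@N$ is globally generated, hence admits a nowhere-vanishing section, which yields the exact sequence $0\to\@M^{-1}\to\@O_C\oplus(\@M^{-1}\otimes\@N)\to\@N\to 0$ (the paper's sequence \eqref{2.3}, untwisted), and a single application of Proposition \ref{bridge}, with the split bundle itself as the middle term, gives the needed concordance. With that modification your induction (base case $n=1$ instead of the paper's explicit $n=2$ case) and your proof of part (2) go through.
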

\begin{proof}  We first prove (1) for the case when $n = 2$. For the general case we will use induction.\\ \textbf{Case 1:} $n=2$. First assume $\@E$ is globally generated and denote $\det(\@E)$ by $\@L$. Then by \cite[II, Exercise 8.2]{hart},  we have the following short exact sequence
	\begin{equation}\label{2.1}
		0 \rightarrow \@O_C \rightarrow \@E \rightarrow \@E' \rightarrow 0
	\end{equation}
	where $\@E'$ is a line bundle. By the Whitney sum formula of Chern classes (\cite[Theorem 5.3(c)]{EH}) $$c_1(\@L) = c_1(\@E) = c_1(\@O_C) + c_1(\@E') = c_1(\@E').$$
	Therefore by Proposition \ref{bridge}, $\@E$ is directly $\A^1$-concordant to $\@O_C \oplus \@L$. 
	For a general $\@E$, choose $m >>0$ such that $\@E(m)$, $\@L(m)$ are globally generated. Then again by applying \cite[II, Exercise 8.2]{hart} we get a short exact sequence for $\@E(m)$ which we tensor by $\@O(-m) $ to obtain the following short exact sequence.s
	\begin{equation}
		0 \rightarrow \@O_C(-m) \rightarrow \@E \rightarrow \@L(m) \rightarrow 0
	\end{equation}
	This proves $\@E$ is directly $\A^1$-concordant to $\@O_C(-m) \oplus \@L(m)$. As the final step we now prove that $\@O_C(-m) \oplus \@L(m)$ is directly $\A^1$-concordant to $\@O_C \oplus \@L$.
	Note that $m$ is chosen such that $\@L(m)$ is globally generated, therefore $\@O_C(m) \oplus \@L(m)$ is globally generated. Hence we have a short exact sequence which shows  $\@O_C(-m) \oplus \@L(m)$ is directly $\A^1$-concordant to $\@O_C \oplus \@L$.
		\begin{equation}\label{2.3}
0 \to \@O_C(-m) \to \@O_C \oplus \@L \to \@L(m)\to 0
	\end{equation}
Therefore, $\@E$ is $\A^1$-concordant to $\@O_C\oplus \@L$.\\
\textbf{Case 2:} Now we handle the general case. So assume $n >2$ and choose $m$ such that $\@E(m)$ and $\@L(m)$ are globally generated. Then we have a short exact sequence giving a direct $\A^1$-concordance between $\@E$ and $\@O_C(-m) \oplus \@E'$, where $\@E'$ is a vector bundle of rank $n-1$ with determinant $\@L(m)$. By induction, $\@E'$ is $\A^1$-concordant to $\@O_C^{n-2} \oplus \@L(m)$. Therefore by the second statement of Corollary \ref{obv}  we have an $\A^1$-concordance between $\@O_C(-m)\oplus\@O_C^{n-2} \oplus \@L(m)$ and $\@O_C(-m) \oplus \@E'$. Hence  $\@E$ is $\A^1$-concordant to $\@O(-m)\oplus\@O_C^{n-2} \oplus \@L(m)$. Now by the  short exact sequence \ref{2.3}, $\@O_C\oplus \@L$ is directly $\A^1$-concordant to $\@O_C(-m) \oplus \@L(m)$, which implies --  by first statement of  Corollary \ref{obv} -- that $\@O(-m)\oplus\@O_C^{n-2} \oplus \@L(m)$ is directly $\A^1$-concordant to $\@O_C^{n-1} \oplus \@L$, thus finishing the proof of (1).\\
For proving (2), we first observe that if $\det(\@E) \cong \det(\@F)$ then (1) implies that $\@E$ is $\A^1$-concordant to $\@F$. Hence it remains to show that if $\@E$ is directly $\A^1$-concordant to $\@F$ then $\mathrm{det}(\@E)\cong \mathrm{det}(\@F)$.\\ So assume that $\@E$ is directly $\A^1$-concordant to $\@F$, which by definition gives us a vector bundle $\@E'$ on $C \times \A^1$ such that $i_0^{\ast}\@E' \cong \@E$ and $i_1^{\ast}\@E' \cong \@F$. We have $ c_1(\mathrm{det}\@E) = c_1(i_{0}^{\ast}\@E') = i_0^{\ast}(c_1(\@E'))$, where the first equality follows from the isomorphism $i_0^{\ast}\@E' \cong \@E$ and the fact that for any vector bundle $\@V$,  $c_1(\@V)=c_1(\mathrm{det}\@V)$, while the second equality is the functoriality of Chern classes (\cite[Theorem 5.3(d)]{EH}). Similarly we have $ c_1(\mathrm{det}\@F) = c_1(i_{1}^{\ast}\@E') = i_1^{\ast}(c_1(\@E'))$. Moreover, $(i_{0})^{\ast} = (i_{1})^{\ast}: \CH^1(C \times \A^1) \to \CH^1(C) $, which implies $c_1(\mathrm{det}\@E)=c_1(\mathrm{det}\@F)$  Therefore $\mathrm{det}(\@E) \cong \mathrm{det}(\@F)$.
	
\end{proof}

\noindent Before we proceed with the proof of Theorem \ref{main}, we recall some standard definitions.\\
\begin{definition}
	Let $\@X$ be a simplicial sheaf and $U$ a scheme. Then $x$ and $y$  in $\@X(U)$ are said to be naively $\A^1$-homotopic if there exists $f: \A^1_U \rightarrow \@X$ such that $f_0 = x$ and $f_1 =y$, where $f_i $ is the composition $ U \xrightarrow{i} \A^1_U \xrightarrow{f} \@X$, for $i=0,1$.
\end{definition}	

\begin{definition}
For a given simplicial sheaf $\@X$ we define $\@S(\@X)$ to be the Nisnevich sheafification of the presheaf  $U \mapsto \@X(U)/\sim$, where $\sim$ is the equivalence relation generated by naive $\A^1$-homotopies.
\end{definition}
\noindent The following standard lemma will be required in our proof. It's essentially \cite[Section 2, Corollary 3.22]{Morel-Voevodsky} combined with \cite[Lemma 6.1.3.]{Morel-connectivity}, 
\begin{lemma}\label{weak}
 A simplicial sheaf $\@X$ is $\A^1$-connected if $\@S(\@X)(F) = \ast$ for every finitely generated field extension $F$ over $k$.
\end{lemma}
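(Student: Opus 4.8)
The plan is to reinterpret the statement in terms of the sheaf of $\A^1$-connected components. Write $L_{\A^1}$ for an $\A^1$-fibrant replacement functor on simplicial Nisnevich sheaves and put $\pi_0^{\A^1}(\@X) := \pi_0(L_{\A^1}\@X)$, the associated Nisnevich sheaf of sets; by definition $\@X$ is $\A^1$-connected exactly when $\pi_0^{\A^1}(\@X)$ is the terminal sheaf $\ast$. So the whole task reduces to showing that, under the hypothesis $\@S(\@X)(F) = \ast$ for every finitely generated field extension $F/k$, one has $\pi_0^{\A^1}(\@X) = \ast$.

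First I would produce a comparison morphism $\@S(\@X) \to \pi_0^{\A^1}(\@X)$ and check that it is an epimorphism of Nisnevich sheaves. The point is that two naively $\A^1$-homotopic sections of $\@X$ over $U$ become equal after composing with $\@X \to L_{\A^1}\@X$ — the homotopy is a map out of $\A^1_U$ and dies once the target is $\A^1$-local — so the canonical map $\@X \to \pi_0^{\A^1}(\@X)$ factors through the sheaf $\@S(\@X)$ defined above; surjectivity is the assertion that $\A^1$-localization creates no new connected components, which one reads off from the explicit iterated-singular-functor model for $L_{\A^1}$ in \cite[Section 2, Corollary 3.22]{Morel-Voevodsky} (on $\pi_0$ each stage of that construction is a sheafified quotient of the previous one, and $\@S(\@X)$ sits at the bottom of the resulting tower).

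Next I would evaluate on finitely generated field extensions. Over $\Spec F$ the sections of a Nisnevich sheaf coincide with its stalk at the corresponding point of the site, and the stalk functor is exact and commutes with filtered colimits; feeding the hypothesis $\@S(\@X)(F) = \ast$ through the epimorphism above (equivalently, up the tower of the previous step) therefore keeps every term a one-point set, giving $\pi_0^{\A^1}(\@X)(F) = \ast$ for every finitely generated field extension $F/k$. To conclude, I would invoke \cite[Lemma 6.1.3]{Morel-connectivity}: since $\pi_0^{\A^1}(\@X)$ is an $\A^1$-invariant Nisnevich sheaf of sets, it is detected on finitely generated field extensions of $k$ (over a smooth connected $U$ it already injects into its value at the generic point $\Spec k(U)$), so triviality on all such fields forces $\pi_0^{\A^1}(\@X) = \ast$. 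This is precisely the $\A^1$-connectedness of $\@X$.

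The genuinely load-bearing step is the comparison in the second paragraph: one must be careful that $\pi_0$ commutes with Nisnevich-local replacement and with the (possibly transfinite) colimit defining $L_{\A^1}\@X$, and that the object at the bottom of the resulting tower is precisely the sheaf $\@S(\@X)$ of the definition above rather than some a priori coarser quotient. Everything else is either formal (the passage to field-valued points) or a black box imported from Morel's book (the $\A^1$-invariance of $\pi_0^{\A^1}$ and its detection on generic points), so I expect the write-up to be short once this identification is pinned down.
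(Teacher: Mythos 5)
Your overall route is the one the paper itself intends (the paper offers no argument beyond the sentence citing \cite[Section 2, Corollary 3.22]{Morel-Voevodsky} together with \cite[Lemma 6.1.3]{Morel-connectivity}), and your first two steps are a reasonable unpacking of the first citation: the map $\@X \to \pi_0^{\A^1}(\@X)$ coequalizes naive $\A^1$-homotopies, so it factors through $\@S(\@X)$, the induced map $\@S(\@X)\to \pi_0^{\A^1}(\@X)$ is an epimorphism of Nisnevich sheaves, and since $\Spec F$ for $F/k$ finitely generated has no nontrivial Nisnevich covers, evaluation at such $F$ is compatible with the singular-construction tower computing the $\A^1$-localization, whence $\@S(\@X)(F)=\ast$ forces $\pi_0^{\A^1}(\@X)(F)=\ast$.

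The genuine problem is your justification of the final step. You assert that $\pi_0^{\A^1}(\@X)$ is an $\A^1$-invariant Nisnevich sheaf of sets and that such a sheaf injects into its sections at the generic point of any smooth connected $U$, and you describe both as black boxes from \cite{Morel}. Neither is available: $\A^1$-invariance of $\pi_0^{\A^1}$ of a general space is precisely Morel's conjecture, not a theorem (and it is now known to fail in general), and even granting $\A^1$-invariance, injectivity into the generic stalk is not formal for sheaves of \emph{sets} --- Morel's unramifiedness/injectivity results require strongly or strictly $\A^1$-invariant sheaves of \emph{groups}. What you actually need --- that triviality of $\pi_0^{\A^1}(\@X)$ on all finitely generated extensions $F/k$ implies $\pi_0^{\A^1}(\@X)=\ast$, i.e.\ that $\A^1$-connectedness is detected on finitely generated fields --- is exactly the content of \cite[Lemma 6.1.3]{Morel-connectivity}, whose proof works with an $\A^1$-fibrant model and sections over henselian local schemes and does not pass through $\A^1$-invariance of the sheafified $\pi_0$. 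So invoke that lemma directly (as the paper does) instead of routing through ``$\A^1$-invariant, hence detected at generic points''; with that repair your argument coincides with the intended one, but as written that step would not survive scrutiny.
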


Now we have all the ingredients in place to prove Theorem \ref{main}.

\begin{proof}[ Proof of Theorem \ref{main}] 
	We regard $Bun_{n,\@L}$ as a simplicial sheaf. By definition, any two $F$-valued points of $Bun_{n,\@L}$ are two rank $n$ (with determinant condition) vector bundles, say $\@E_0$ and $\@E_1$ on $C$. A morphism $\A^1_F \rightarrow Bun_{n,\@L}$ is a vector bundle $\@E$ on $C \times \A^1$. Then $\@E_0$ and $\@E_1$ are naively $\A^1$-homotopic if and only if they are $\A^1$-concordant. By Theorem \ref{conco} both $\@E_0$ and $\@E_1$ are $\A^1$-concordant to $\@O_C^{n-1} \oplus \@L$. Hence they are $\A^1$-concordant to each other. Therefore by Lemma \ref{weak}, $Bun_{n,\@L}$ is $\A^1$-connected.
\end{proof}

Motivated by the question of $\A^1$-connectedness of moduli stack of stable vector bundles, we observe in the example below that there does not seem to be an immediate way of concluding $\A^1$-connectedness of a stack by looking at its coarse moduli space. \begin{example}\label{stack}
	Let $C$ be a curve of genus $2$ over a field with characteristic not equal to $2$. In particular it is a hyperelliptic curve (See \cite[IV, Exercise 1.7(a)]{hart}). Therefore, there is a finite morphism $f: C \rightarrow \P^1$ of degree $2$ and we have an action of the finite group $G:=\Z/2\Z$ on $C$. By \cite[IV, Exercise 2.2(a)]{hart}, such a morphism is unramified at all but 6 points (denoted as closed subscheme $Z'$) of $C$. So the action of $G$ is free on $C \setminus Z'$. Let $Z$ denote the closed subset in $\P^1$ corresponding to the 6 branched points. The quotient stack $\big[C/G\big]$ has coarse moduli space $\P^1$ and the morphism $\pi: \big[C/G\big] \rightarrow \P^1$ gives an isomorphism of an open subscheme of $\big[C/G\big] $ with $\P^1 \setminus Z$. See \cite[Example 8.1.12]{ols} for more details on Quotient stacks.\\
Let $E(G)$ denote the simplicially contractible, simplicial sheaf with $E(G)_n = G^{n+1}$(See \cite[Example 1.11, page 128]{Morel-Voevodsky}). The morphism $C \to \big[C/G\big] $ is a $G$-torsor. 
 Moreover $G$ acts freely on the space $E(G) \times C$ and $(E(G) \times C)/G \simeq \big[C/G\big] $.  $\pi_0^{\A^1}(G) \cong \Z/2\Z$ being a finite abelian group is a strictly $\A^1$-invariant sheaf. So all the hypothesis of the statement of \cite[Theorem 6.50]{Morel} are satisfied, as a consequence of which we obtain the following long exact sequence of $\A^1$-homotopy groups/pointed sets, where $\ast$ is a chosen basepoint.
 $$ \cdots \to \pi_0^{\A^1}(G, \ast) \to \pi_0^{\A^1}(E(G) \times C, \ast)  \to \pi_0^{\A^1}( \big[C/G\big], \ast) \to \ast$$ But on the account of $E(G)$ being simplicially contractible  and $C$ being $\A^1$-rigid (as all curves of genus $g > 0$ are) $\pi_0^{\A^1}(E(G) \times C) \cong \pi_0^{\A^1}(C) \cong C $. So by long exact sequence, $\big[C/G\big]$ being $\A^1$-connected would imply surjection of finite group $\Z/2\Z$ on $C \cong \pi_0^{\A^1}(C)$, which can not happen.
\end{example}
\begin{remark}\label{2.10}
	 By definition, any hyperelliptic curve of genus $g$ admits a finite map of degree $2$ to $\P^1$. By Hurwitz's theorem, such a morphism has $2g+2$ ramified points. Therefore Example \ref{stack} can be generalized to an hyperelliptic curve of any genus (which is necessarily greater than 1).
\end{remark}
\section{Applications}
As applications of the results in the previous section we give a proof of Theorem \ref{curve} and Theorem \ref{count}. We first recall the following definition from \cite{Asok-Morel}. 

\begin{definition}\cite[Definition 3.1.1]{Asok-Morel}\label{cob}
	Let $X_0$ and $X_1 $ be smooth and proper varieties over $k$.  They are directly $\A^1$-$h$- cobordant if there exists a smooth scheme $X$ with $f: X \rightarrow \A^1$  a proper surjective morphism such that 
	\begin{enumerate}
		\item the fibers of $f$ over $0$ and $1$ are $X_0$ and $X_1$ respectively
		\item the natural maps $X_i \inj X$ for $i=0,1$ are $\A^1$-weak equivalences.
		
	\end{enumerate}
	$X_0$ and $X_1 $  are $\A^1$-$h$-cobordant if they are equivalent under the equivalence relation generated by direct $\A^1$-$h$-cobordance.
\end{definition}
While $\A^1$-concordance is a relation between vector bundles, $\A^1$-$h$-cobordism a relation between proper schemes. Note that by \cite[Lemma 6.4]{AKW}, projectivizations of $\A^1$-concordant vector bundles are $\A^1$-$h$-cobordant.\\
Recall that, given a locally free sheaf $\@E$ of rank $n+1$ on a scheme $X$, the associated $\P^n$-bundle, denoted $\P_X(\@E)$ is the scheme $\Proj_X(\Sym(\@E))$. Here $\Proj$ is the relative proj construction and $\Sym(\@E)$ is the symmetric algebra of $\@E$ as an $\@O_X$-module. See \cite[II, page 162]{hart} for more details.\\
We now paraphrase the classification of $\P^n$-bundles on $\P^1$ up to $\A^1$-weak equivalence proved in \cite{Asok-Morel} to highlight that Theorem \ref{curve} is its direct generalization to an arbitrary smooth projective curve.
\begin{proposition}\cite[Proposition 3.2.10]{Asok-Morel}\label{3.2}
Let $X := \P (\@O_{\P^1}^{n} \oplus \@O_{\P^1}(a))$ and $Y := \P (\@O_{\P^1}^{n} \oplus \@O_{\P^1}(b))$ be two $\P^n$-bundles over $\P^1$. Then the following statements are equivalent:
\begin{enumerate}
	\item $X$ and $Y$ are $\A^1$- weakly equivalent.
	\item $X$ and $Y$ are $\A^1$-h-cobordant.
	\item $n+1$ divides $a-b$.
\end{enumerate}  
\end{proposition}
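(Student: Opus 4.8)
The plan is to prove the three conditions equivalent through the cycle $(3)\Rightarrow(2)\Rightarrow(1)\Rightarrow(3)$, treating the first two as constructive consequences of the concordance machinery of Section~2 and reserving the homotopical content for the last. The implication $(2)\Rightarrow(1)$ is immediate: in a direct $\A^1$-$h$-cobordism the inclusions $X_i\hookrightarrow X$ are, by Definition~\ref{cob}, $\A^1$-weak equivalences, so directly $\A^1$-$h$-cobordant varieties are $\A^1$-weakly equivalent, and hence so are any two varieties in the generated equivalence relation.

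For $(3)\Rightarrow(2)$ I would combine the twisting invariance $\P(\@E)\cong\P(\@E\otimes\@M)$ of projectivization with Theorem~\ref{conco}. Writing $a-b=(n+1)c$ and twisting by $\@O_{\P^1}(c)$ gives
\[
 Y=\P\big(\@O_{\P^1}^{n}\oplus\@O_{\P^1}(b)\big)\;\cong\;\P\big(\@O_{\P^1}(c)^{\oplus n}\oplus\@O_{\P^1}(b+c)\big)=:\P(\@G),
\]
and the bundle $\@G$ has $\det\@G=\@O_{\P^1}\big(b+(n+1)c\big)=\@O_{\P^1}(a)=\det\@E$ for $\@E=\@O_{\P^1}^{n}\oplus\@O_{\P^1}(a)$. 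By Theorem~\ref{conco}(2) the equal-determinant bundles $\@G$ and $\@E$ are $\A^1$-concordant, whence by \cite[Lemma~6.4]{AKW} their projectivizations $\P(\@G)\cong Y$ and $\P(\@E)=X$ are $\A^1$-$h$-cobordant. This is exactly the restriction of Theorem~\ref{curve} to $C=\P^1$, so it delivers $(3)\Rightarrow(2)$.

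The hard part is $(1)\Rightarrow(3)$, which asks for an $\A^1$-homotopy invariant fine enough to separate the classes. The natural first candidate, the graded Chow ring, is an $\A^1$-weak-equivalence invariant: with $h$ the pullback of the hyperplane class on $\P^1$ and $\xi=c_1(\@O_{\P(\@E)}(1))$ one has $\CH^{\ast}(X)\cong\Z[h,\xi]/\big(h^2,\ \xi^{n+1}+ah\,\xi^{n}\big)$, and the substitution $\xi\mapsto\xi+\lambda h$ already shows this ring depends only on $a\bmod(n+1)$. I expect, however, that it is too coarse. A graded ring homomorphism $\CH^{\ast}(X)\to\CH^{\ast}(Y)$ must send $h\mapsto\alpha h$ with $\alpha=\pm1$ (as $h$ is, up to sign, the unique degree-one class squaring to zero) and $\xi\mapsto\delta\xi+\gamma h$ with $\delta=\pm1$; imposing the defining relation then forces $(n+1)\gamma=b\delta-a\alpha$, so that $\CH^{\ast}(X)\cong\CH^{\ast}(Y)$ precisely when $a\equiv\pm b\pmod{n+1}$. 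Already $n=2$, $a=1$, $b=2$ yields isomorphic Chow rings while $3\nmid a-b$, so the cohomology ring alone cannot decide $(3)$.

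To overcome this I would break the surviving sign ambiguity with an \emph{oriented} invariant: rather than the abstract ring, I would remember the distinguished relatively-ample generator $\xi$ and argue that an $\A^1$-weak equivalence preserves it (not merely up to the sign-reversal $\xi\mapsto-\xi+\gamma h$ that the ring isomorphisms allow). Concretely these split bundles are $\A^1$-cellular, assembled from the cells of $\P^1$ and of the fibre $\P^n$, and the twist $a$ records the class of an attaching map in an $\A^1$-homotopy sheaf; since this class lives in an honest abelian group it determines $a\bmod(n+1)$ with its sign. Carrying out this attaching-map and orientation bookkeeping is the delicate step, being exactly the feature invisible to $\CH^{\ast}$; it is where genuine unstable $\A^1$-homotopy input (as in the analysis of \cite{Asok-Morel}) enters, and what I expect to be the main obstacle.
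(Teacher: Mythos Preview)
The paper does not give its own proof of Proposition~\ref{3.2}: it is quoted from \cite{Asok-Morel}, and the paper's own contribution is the generalization Theorem~\ref{curve}. The natural comparison is therefore with the proof of Theorem~\ref{curve}, specialized to $C=\P^1$.

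Your implications $(3)\Rightarrow(2)$ and $(2)\Rightarrow(1)$ coincide with the paper's argument for Theorem~\ref{curve}: both twist by a line bundle, invoke Theorem~\ref{conco} and \cite[Lemma~6.4]{AKW}, and then use the definition of $\A^1$-$h$-cobordism.

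For $(1)\Rightarrow(3)$ you diverge from the paper, and in fact you are more careful. The paper argues purely via the graded Chow ring: it takes a graded isomorphism $\phi\colon R_1\to R_2$, writes $\phi(\zeta)=x+a\sigma$ with $a=\pm1$, treats only $a=1$, and dismisses $a=-1$ as ``similar''. But in that computation the paper silently replaces $\phi(c_1(\@L_1))$ by $c_1(\@L_1)$, i.e.\ it assumes $\phi$ is the identity on $\Pic(C)$. Your analysis shows why this is not innocent over $\P^1$: allowing the automorphism $h\mapsto-h$ of $\Pic(\P^1)\cong\Z$ produces the relation $a\equiv -b\pmod{n+1}$ rather than $a\equiv b$, and your example $n=2$, $a=1$, $b=2$ exhibits isomorphic Chow rings with $(n+1)\nmid(a-b)$. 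So the abstract Chow-ring comparison, exactly as the paper runs it, does not by itself yield $(1)\Rightarrow(3)$ over $\P^1$; you have put your finger on a genuine sign ambiguity that the paper's ``similar'' hides.

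Your proposed repair---retaining the orientation of the relatively ample class, equivalently reading $a\bmod(n+1)$ from an attaching map in the $\A^1$-cellular decomposition---points in the right direction and is in the spirit of the original \cite{Asok-Morel} argument, but as you yourself say it is only a sketch. Thus the honest summary is: on $(3)\Rightarrow(2)\Rightarrow(1)$ you match the paper; on $(1)\Rightarrow(3)$ you correctly diagnose that the paper's Chow-ring argument (as written) is insufficient for $\P^1$, and you outline but do not complete the finer invariant needed to close the gap.
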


Note that in case of $ C =\P^1$ the condition ${\rm det}(\@E) \tensor {\rm det}(\@F)^{-1} = \@L^{\otimes n+1}$ in Theorem \ref{curve} exactly translates to the fact that $n+1$ divides $a-b$ as stated in the  Proposition \ref{3.2}. This is due to $\mathrm{Pic}(\P^1)$ being isomorphic to  $\Z$. For a general curve Picard group is much more complicated and humongous (think of the Jacobian variety of a curve) so one doesn't get any further simplification. We now prove Theorem \ref{curve}, which is an extension of the previous proposition.

\begin{proof}[Proof of Theorem \ref{curve}]

		(3) $\implies$ (2): By Theorem \ref{conco}, $\@E$ is $\A^1$-concordant to $\@O_C^{n} \oplus \@L_1$, where $\@L_1 = \mathrm{det}(\@E)$ and $\@F$ is $\A^1$-concordant to $\@O_C^{n} \oplus \@L_2$, where $\@L_2 = \mathrm{det}(\@F)$. Hence $X$ and $\P_C(\@O_C^{n} \oplus \@L_1)$ are $\A^1$-$h$-cobordant. In the exact same manner, $Y$ and $\P_C(\@O_C^{n} \oplus \@L_2)$ are $\A^1$-$h$-cobordant. Suppose $\@L_1 \otimes \@L_2 ^{-1} = \@L^{\otimes n+1}$. That implies $\@L_1 = \@L^{\otimes n+1} \otimes \@L_2$ for some $\@L \in \Pic(C)$. Let $\@E' = (\@O_C ^n\oplus \@L_2) \otimes \@L $. Then $\mathrm{det}(\@E') = \@L_1$. Therefore $\P(\@E')$ is $\A^1$-$h$-cobordant to $\P(\@O_C^n \oplus \@L_1)$. Furthermore, $\P(\@E')$ is isomorphic (as a scheme) to $\P(\@O_C^n \oplus \@L_2)$ by the general fact that tensoring a vector bundle by a line bundle gives an isomorphism of projectivization of the two vector bundles. This proves $X$ and $Y$ are $\A^1$-$h$-cobordant.\\
		
		(2) $\implies$ (1): this is immediate from the definition of $\A^1$-$h$-cobordism.\\

(1) $\implies$ (3) :  $\A^1$-invariance of the Chow rings implies that it is enough to show that the Chow rings of $\P(\@O_C^{n}\oplus\@L_1)$ and $\P(\@O_C^{n}\oplus\@L_2)$ are not isomorphic if $\@L_1 \otimes \@L_2^{-1} \neq  \@L^{\otimes n+1}$ for any $\@L \in \Pic (C)$. The Chow ring of $C$ -- which is simply $\Z \oplus \Pic(C)$, with product of any two line bundles under the ring structure being zero -- is denoted $R$.  For simplicity of notation we will denote $\@O_C^n \oplus \@L_i$, $i=1,2$ by $\@E_i$.  Then by the projective bundle formula for the Chow rings \cite[Theorem 9.6]{EH}, the Chow ring of $\P(\@E_1)$ is $ R_1 := R[\zeta]/(\zeta^{n+1} + c_1(\@E_1)\zeta^{n})$. But $c_1(\@E_1) = c_1(\@L_1)$. In the ring $R_1$, $\zeta$ as well as any element $x \in \Pic(C)$ has grading $1$ with $xy=0$ for $x,y \in \Pic(C)$. Let's assume we have a graded ring isomorphism $\phi$ between $R_1$ and $R _2 := R[\sigma]/(\sigma^{n+1} + c_1(\@L_2)\sigma^{n})$. Then such an isomorphism has to respect the grading and hence $\phi(\zeta) = x + a\sigma$, where $x \in \Pic(C)$ and $a \in \Z$.  Similarly $\phi^{-1}(\sigma) = y + b\zeta$, where $b \in \Z$ and $y \in \Pic(C)$.  We first prove that $ a = \pm 1$. The condition $\phi^{-1}\circ \phi (\zeta) = \zeta$ implies that $x + ay + ab\zeta = \zeta$. Hence $ab =1$, so $a = \pm1$.\\By the graded ring structure of $R_1$, as discussed before, $x^i =0$ for any $i>1$. Moreover $\phi(\zeta^{n+1} + c_1(\@L_1)\zeta^{n})$ has to be divisible by $\sigma^{n+1} + c_1(\@L_2)\sigma^{n}$ in $R_2$. First assume $a=1$. Proof for the case $a=-1$ is similar. We expand $\phi(\zeta^{n+1} + c_1(\@L_1)\zeta^{n})$ as $\sigma^{n+1} + \sigma^n((n+1)x + c_1(\@L_1))$ and this expression is divisible by $\sigma^{n+1} + c_1(\@L_2)\sigma^{n}$. Comparing coefficients we conclude that  $c_1(\@L_1) - c_1(\@L_2) = (n+1)x$. This implies that  $\@L_1 \otimes \@L_2^{-1} = \@L^{\otimes n+1}$, where $c_1(\@L) =  x$.
	
\end{proof}
\noindent Now, we answer a question raised in \cite{AKW}, negatively. 
\begin{question}\cite[Question 6.9.1]{AKW}
If $X$ is any smooth projective variety that is $\A^1$-h-cobordant to a $\P^1$-bundle over $\P^2$, does $X$ have the structure of a $\P^1$-bundle over $\P^2$?
\end{question}
The authors further add the answer is possibly no and non-trivial rank three vector bundles over $\P^1$ deformable to the trivial one are the likely counterexamples. We now prove Theorem \ref{count} which shows that the example alluded to above is indeed a correct counterexample.
\begin{proof} [Proof of Theorem \ref{count}]: By Theorem \ref{conco}, $ X:=\P(\@E) \xrightarrow{\pi} \P^1$ is $\A^1$-$h$-cobordant  to the trivial $\P^2$- bundle on $\P^1$, namely, $\P^1 \times \P^2$. However $X$ and $\P^1 \times \P^2$ are not isomorphic as schemes. By \cite[II, Exercise 7.9(b)]{hart} an isomorphism would imply that for some line bundle on $\P^1$, say $\@O(a)$ where $a \in \Z$,  $\@O(a) \otimes \@E \simeq \@O(a) \oplus \@O(a-1) \oplus \@O(a+1)\simeq \@O^{\oplus 3}$ is an isomorphism of vector bundles on $\P^1$, which can not happen. \\Now suppose $ X \xrightarrow{\theta} \P_{\P^2}(\@E') := Y \xrightarrow{\phi} \P^2$, with $\theta$ an isomorphism of schemes, for some rank $2$ vector bundle $\@E'$ on $\P^2$. We thus have the following diagram 

	\begin{center}
\begin{tikzcd}
	X \simeq Y \arrow[r, "\phi"] \arrow[d, "\pi"]
	& \P^2  \\ \P^1
	& 
\end{tikzcd}
	\end{center}
	
Without loss of generality we can assume (by twisting $\@E'$ with a suitable line bundle in $\Pic(\P^2) $ as $c_1(\@E' \otimes \@L) = c_1(\@E') + 2c_1(\@L)$), $c_1(\@E') \in \{0,1\}$. Since $Y$ is $\A^1$-weakly equivalent to the trivial bundle on $\P^2$, their Chow rings are isomorphic. By \cite[Lemma 4.5]{AKW}, we have $c_1(\@E')^2 - 4c_2(\@E') = 0$. So  $c_1(\@E') = 0 = c_2(\@E')$. It thus suffices to show that $\@E'$ splits as a direct sum of line bundles as this will prove that $\@E' \simeq \@O_{\P^2} \oplus \@O_{\P^2} $. By the assumption that $X \simeq Y$ this will imply that $X$ is isomorphic to $\P^2 \times \P^1$, which from the discussion in the first paragraph of this proof can not happen.

We will prove that $\phi$ has a section. Such a section will give the following short exact sequence.
\begin{equation}\label{finale}
0 \rightarrow \@L_1 \rightarrow \@E' \rightarrow \@L_2 \rightarrow 0
\end{equation}
As both Chern classes of $\@E'$ vanish, by the Whitney sum formula of Chern classes, both $\@L_1$ and $\@L_2$ will be trivial. Therefore such a short exact sequence has to be a split one. This will prove $\@E' \simeq \@O_{\P^2} \oplus \@O_{\P^2}$.\\
	Define $F \hookrightarrow Y$  to be $\theta \circ \pi^{-1}(z)$ for a point $ z \in \P^1$. By construction $F \simeq \P^2$. We claim $\phi$ maps $F$ isomorphically onto $\P^2$. First we claim that $\phi_{|F}$ is surjective. Suppose not, then $Z:=\phi(F)$ is either a point or an irreducible curve (not necessarily smooth) in $\P^2$.  Since $\phi: Y \to \P^2$ is a $\P^1$-bundle map, the fiber of $\phi$ over each point of $\P^2$  is $\P^1$. Therefore $Z$ can not be a point. So assume $Z$ is an irreducible curve in $\P^2$. Consider smooth points $z_1 \neq z_2 \in Z$. Then using flatness of $\phi$, we have $\phi^{-1}(z_i) \simeq \P^1 \subset F$ for $i=1,2$. However any two lines in $\P^2$ intersect, so $\phi^{-1}(z_1)$ and $\phi^{-1}(z_i)$ intersect in $F$(which is isomorphic to $\P^2$). This contradicts our assumption that $z_1 \neq z_2$.  This establishes the surjectivity of $\phi_{|F}$.  We also conclude $\phi|_{F}$ is a degree $d$ morphism to $\P^2$ with $d\geq 1$. \\
	We now show that $d=1$. This is achieved by comparing the graded ring isomorphism induced on the Chow rings of $X$ and $Y$. The Chow ring of $X$ is $ R _1 := \Z[x,y]/(x^2,y^3)$, where 
	\begin{enumerate}
	\item[(i)] $x$ is the divisor $\P^2$ as a fiber over a point of $\P^1$
	\item[(ii)] $y$ corresponds to a divisor $D'$, such that  the pushforward $\pi_{\ast}(\@O_X(D'))$ to $\P^1$ is the vector bundle $\@E$.
	\end{enumerate} 
Similarly the Chow ring of $Y$ is $ R_2:= \Z[s,t]/(s^2,t^3)$ where 
\begin{enumerate} 
\item[(i)] $t$ corresponds to fiber of $\P^1$ (as a degree $1$ curve in $\P^2$) via $\phi$ 
\item[(ii)]  $s$ corresponds to a divisor $D$, such that  the pushforward $\phi_{\ast}(\@O_Y(D))$ to $\P^2$ is the rank two vector bundle $\@E'$. 
\end{enumerate}
Let $\psi: R_1 \to R_2$ be an isomorphism of graded rings. Then $\psi(x) = as + bt$, where $a,b \in \Z$. We have the condition that $\psi(x^2) = \psi(x)^2 = a^2s^2 + 2abst + b^2t^2$ lies in the ideal generated by $s^2$ and $t^3$. This implies $b=0$. Morever $\psi^{-1}\circ \psi(x) = x$. Therefore $a = \pm 1$. In a similar fashion one proves that any isomorphism between $R_1$ and $R_2$ sends $y$ to $\pm t$. Therefore we conclude that the graded ring isomorphism between $R_1$ and $R_2$ is given by $x \mapsto \pm s$ and $y \mapsto \pm t$. This implies $s$ is equivalent (in the Chow ring) to the class of $F \simeq\P^2$. Grauert's theorem (\cite[III, Corollary 12.9]{hart}) implies that the intersection multiplicity of divisor $D$ corresponding to $s$ (see the description of $R_2$ above) with any fiber of the map $\phi$ is $1$. As $s$ and $F$ are equivalent in the Chow ring, the same holds for $F$. This can not happen unless $d=1$ because if not, one can consider a point $z'$ in $\P^2$ such that the set $\phi|_{\P^2}^{-1}(z') $ has more than one point. This will force $\phi^{-1}(z') = \P^1$ to intersect $\P^2$ in more than one point, meaning an intersection multiplicity of greater than $1$ which as we just proved can not happen. This proves $\phi|_{F}$ is an isomorphism onto $\P^2$ and hence establishes the existence of a section of $\phi$. Via the short exact sequence \ref{finale} this proves that $\@E'$ is a trivial rank 2 vector bundle on $\P^2$, and thereby finishes the proof.
	
\end{proof}
\noindent \textbf{Acknowledgements.} We thank the anonymous referees for their careful reading, comments and corrections. In particular Remark \ref{2.10} is the result of a question by one of the referees. The second-named author was supported by SFB 1085 Higher Invariants, University of Regensburg, NBHM fellowship of the Department of Atomic Energy, Govt.\ of India and Fulbright-Nehru Doctoral Research fellowship during this work and thanks the Department of Math, University of Southern California for the hospitality.

\end{document}